\documentclass[11pt]{article}
\usepackage{libertine}

\usepackage[letterpaper, top=25.4mm, bottom=25.4mm, left=25.4mm, right=25.4mm, includefoot]{geometry}

\usepackage{graphicx}
\usepackage{amsmath, amssymb, amsthm, dsfont}
\usepackage{paralist}
\usepackage{mathrsfs}
\usepackage{tikz}
\usepackage{aligned-overset}
\usepackage[ruled]{algorithm2e}
\usepackage{todonotes}
\usepackage{url}
\usepackage{comment}
\usepackage{nicefrac}
\usepackage{textpos}

\newcommand{\N}{\mathbb{N}}

\newcommand{\Oh}{\mathcal{O}}

\newcommand{\dist}{\mathsf{dist}}
\newcommand{\diam}{\mathsf{diam}}

\renewcommand{\subset}{\subseteq}
\renewcommand{\epsilon}{\varepsilon}

\theoremstyle{definition}
\newtheorem{definition}{Definition}

\theoremstyle{plain}
\newtheorem{theorem}{Theorem}
\newtheorem{corollary}{Corollary}
\newtheorem{lemma}{Lemma}

\newtheorem{observation}{Observation}

\usepackage{hyperref}
\usepackage[nameinlink]{cleveref}

\crefname{observation}{Observation}{Observations}
\crefname{lemma}{Lemma}{Lemmas}
\crefname{theorem}{Theorem}{Theorems}
\crefname{section}{Section}{Sections}
\crefname{definition}{Definition}{Definitions}
\crefname{corollary}{Corollary}{Corollaries}

\usepackage{xcolor}

\colorlet{myGreen}{green!50!black}
\colorlet{myLightgreen}{green}
\colorlet{myRed}{red!90!black}
\definecolor{myBlue}{rgb}{0.25, 0.0, 1.0}
\definecolor{myLightBlue}{rgb}{0.39, 0.58, 0.93}
\colorlet{myViolet}{myBlue!55!myRed}
\definecolor{myOrange}{rgb}{1.0, 0.66, 0.07}

\definecolor{CornflowerBlue}{rgb}{0.39, 0.58, 0.93}
\definecolor{DarkGoldenrod}{rgb}{0.72, 0.53, 0.04}
\definecolor{BritishRacingGreen}{rgb}{0.0, 0.26, 0.15}
\definecolor{DarkMagenta}{rgb}{0.55, 0.0, 0.55}
\definecolor{AO}{rgb}{0.0, 0.5, 0.0}
\definecolor{BostonUniversityRed}{rgb}{0.8, 0.0, 0.0}
\definecolor{myRed}{rgb}{0.8, 0.0, 0.0}
\definecolor{DarkMidnightBlue}{rgb}{0.0, 0.2, 0.4}
\definecolor{DarkTangerine}{rgb}{1.0, 0.66, 0.07}
\definecolor{AppleGreen}{rgb}{0.55, 0.71, 0.0}
\definecolor{BrightUbe}{rgb}{0.82, 0.62, 0.91}
\definecolor{Amethyst}{rgb}{0.6, 0.4, 0.8}
\definecolor{DarkGray}{rgb}{0.52, 0.52, 0.51}
\definecolor{Gray}{rgb}{0.66, 0.66, 0.66}
\definecolor{BananaYellow}{rgb}{1.0, 0.88, 0.21}
\definecolor{Amber}{rgb}{1.0, 0.75, 0.0}
\definecolor{LightGray}{rgb}{0.83, 0.83, 0.83}
\definecolor{PrincetonOrange}{rgb}{1.0, 0.56, 0.0}
\definecolor{DeepCarrotOrange}{rgb}{0.91, 0.41, 0.17}
\definecolor{CarrotOrange}{rgb}{0.93, 0.57, 0.13}
\definecolor{MidnightBlue}{rgb}{0.1, 0.1, 0.44}
\definecolor{Magenta}{rgb}{0.50, 0.0, 0.50}
\definecolor{BrightPink}{rgb}{1.0, 0.0, 0.5}
\definecolor{BrilliantRose}{rgb}{1.0, 0.33, 0.64}
\definecolor{ChromeYellow}{rgb}{1.0, 0.65, 0.0}
\definecolor{HotMagenta}{rgb}{1.0, 0.11, 0.81}

\definecolor{DarkTangerine}{rgb}{1.0, 0.66, 0.07}
\definecolor{darkyellow}{rgb}{.7, .6, 0.0}
\definecolor{CornflowerBlue}{rgb}{0.39, 0.58, 0.93}
\definecolor{DarkGoldenrod}{rgb}{0.72, 0.53, 0.04}
\definecolor{BritishRacingGreen}{rgb}{0.0, 0.26, 0.15}
\definecolor{AO}{rgb}{0.0, 0.5, 0.0}
\definecolor{MidnightBlack}{rgb}{0.1,0.1,.34}
\definecolor{MidnightBlue}{rgb}{0.1,0.1,0.43}
\definecolor{Black}{rgb}{0,0, 0}
\definecolor{Blue}{rgb}{0, 0 ,1}
\definecolor{Red}{rgb}{1, 0 ,0}
\definecolor{White}{rgb}{1, 1, 1}
\definecolor{DeepMagenta}{rgb}{0.8, 0.0, 0.8}
\definecolor{grey}{rgb}{.6, .6, .6}
\definecolor{darkgrey}{rgb}{.33, .33, .33}
\definecolor{Mygreen}{rgb}{.0, .7, .0}
\definecolor{Yellow}{rgb}{.55,.55,0}
\definecolor{Mustard}{rgb}{1.0, 0.86, 0.35}
\definecolor{applegreen}{rgb}{0.55, 0.71, 0.0}
\definecolor{darkturquoise}{rgb}{0.0, 0.81, 0.82}
\definecolor{celestialblue}{rgb}{0.29, 0.59, 0.82}
\definecolor{green_yellow}{rgb}{0.68, 1.0, 0.18}
\definecolor{crimsonglory}{rgb}{0.75, 0.0, 0.2}
\definecolor{darkmagenta}{rgb}{0.30, 0.0, 0.30}
\definecolor{magenta}{rgb}{0.50, 0.0, 0.50}
\definecolor{internationalorange}{rgb}{1.0, 0.31, 0.0}
\definecolor{darkorange}{rgb}{1.0, 0.55, 0.0}
\definecolor{ao}{rgb}{0.0, 0.5, 0.0}
\definecolor{awesome}{rgb}{1.0, 0.13, 0.32}
\definecolor{darkcyan}{rgb}{0.0, 0.50, 0.50}
\definecolor{violet}{rgb}{0.93, 0.51, 0.93}
\definecolor{brown}{rgb}{0.65, 0.16, 0.16}
\definecolor{orange}{rgb}{1.0, 0.65, 0.0}
\definecolor{DarkGreen}{rgb}{0,.5,0}
\definecolor{BostonUniversityRed}{rgb}{0.8, 0.0, 0.0}

\newcommand{\maxlayer}{\mathsf{maxlayer}}
\newcommand{\minlayer}{\mathsf{minlayer}}
\newcommand{\layer}{\mathsf{layer}}
\newcommand{\BFS}{\mathsf{BFS}}

\usepackage{todonotes}
\newcommand{\julia}[2][]{\todo[color=blue!20,#1]{\footnotesize {\textbf{Ju:} #2}}}

\newcommand{\aff}[1]{\textcolor{black!50}{#1}}

\hypersetup{
colorlinks=true,
linkcolor=darkorange!65!black,
citecolor=CornflowerBlue!65!black,
urlcolor=CornflowerBlue!65!black,
bookmarksopen=true,
bookmarksnumbered,
bookmarksopenlevel=2,
bookmarksdepth=3
}

\renewcommand{\leq}{\leqslant}
\renewcommand{\geq}{\geqslant}
\renewcommand{\le}{\leqslant}
\renewcommand{\ge}{\geqslant}

\title{\huge{A coarse block-cut tree theorem}\thanks{
\hspace{-0.66cm} \textbf{Funding information:}\\
JB (during employment in Warsaw), VB, MP: ERC project BOBR, funded from the European Research Council (ERC) under the European Union's Horizon 2020 research and innovation programme with grant agreement No. 948057.\\
JB (during employment in Oxford): ERC grant CCOO (grant no.~101165139)\\
JCz: Polish National Science Centre SONATA BIS-12, grant number 2022/46/E/ST6/00143.\\
EP: ERC grant BUKA (No. 101126229).
}
}

\author{
	Júlia Baligács\\{\small \aff{University of Oxford}}\\\href{mailto:jbaligacs@gmail.com}{\small jbaligacs@gmail.com}\bigskip
	\and
	Václav Blažej\\{\small \aff{University of Warsaw}}\\\href{mailto:v.blazej@uw.edu.pl}{\small v.blazej@uw.edu.pl}
	\and
	Jadwiga Czy\.zewska\\{\small \aff{University of Warsaw}}\\\href{mailto:j.czyzewska@mimuw.edu.pl}{\small j.czyzewska@mimuw.edu.pl}
	\and
	Michał Pilipczuk\\{\small \aff{University of Warsaw}}\\\href{mailto:michal.pilipczuk@mimuw.edu.pl}{\small michal.pilipczuk@mimuw.edu.pl}
	\and
	Evangelos Protopapas\\{\small \aff{University of Warsaw}}\\\href{mailto:eprotopapas@mimuw.edu.pl}{\small eprotopapas@mimuw.edu.pl}	
}

\date{}

\begin{document}
	\renewcommand\footnotemark{}
\maketitle
\begin{textblock}{20}(-1.6, 7)
	\includegraphics[width=35px]{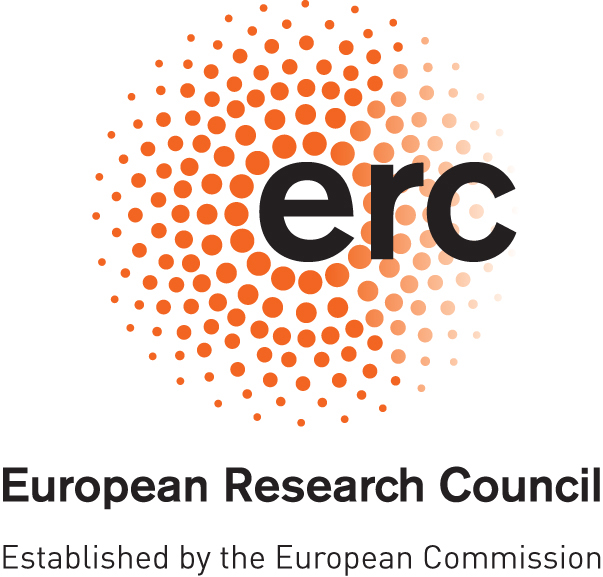}%
\end{textblock}
\begin{textblock}{20}(-1.6, 7.9)
	\includegraphics[width=35px]{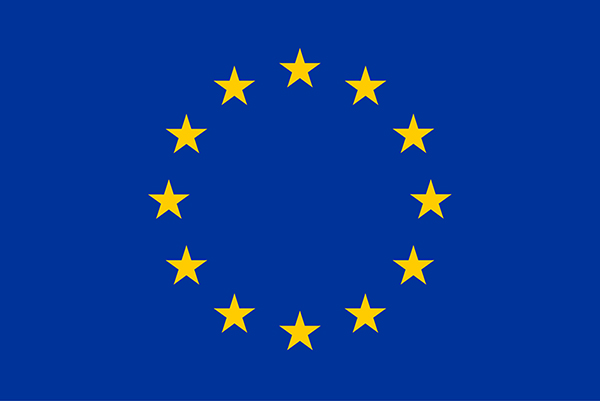}%
\end{textblock}

\begin{abstract}
We prove a coarse analogue of the classic fact that every graph can be decomposed along its cut-vertices into $2$-connected components. Precisely, we prove that for every graph $G$ and a positive integer~$d$, $G$ admits a tree decomposition whose adhesion sets have weak diameter at most $3d+2$ so that no two vertices $u,v$ lying in the same bag can be separated by a set of weak diameter at most $d$ whose distance from $u$ and $v$ is more than $d$. By the Coarse Menger's Theorem for two paths, this condition admits also a dual formulation, phrased in terms of the existence of two paths that are far from each other and connect the vicinity of $u$ with the vicinity of $v$.
\end{abstract}

\thispagestyle{empty}
\newpage

\section{Introduction}

\emph{Coarse graph theory} is a young and vibrant direction in structural graph theory that aims to understand the structure of graphs viewed as metric spaces, by endowing them with the shortest-paths metric. As outlined in the foundational work of Georgakopoulous and Papasoglu~\cite{GeorgakopoulosP2025Graph}, the hope is that many classic results of structural graph theory admit natural coarse analogues, obtained by replacing the conditions that objects intersect or are disjoint by requesting that they are close or far from each other. Recent investigations have revealed a mix of successes and failures. For instance, the Erd\H{o}s--P\'osa Theorem can be lifted to the coarse setting~\cite{AhnGHK25,fatCycles,DujmovicJMM24}, and so can also Gallai's Theorem~\cite{DistelGHLM26}. The coarse variant of Menger's Theorem fails in full generality~\cite{AS4:WeakMenger,AS0:StrongMenger}, but holds in various restricted settings~\cite{AlbrechtsenHJKW2024AMenger,CoarseMengerBPP26,GartlandKL23,GeorgakopoulosP2025Graph,HendreyNST24,Liu26,AS5:pw,AS6:disc}. There are also natural coarse analogues of pathwidth and treewidth~\cite{Hickingbotham25,AS1:CoarseTw,AS2:pwQI}. For these, the coarse analogue of the Excluded Tree Theorem holds~\cite{AS3:FatTree}, but the coarse analogue of the Excluded Grid Theorem fails~\cite{GridCounter}. The Alon-Seymour-Tarsi Separator Theorem can be lifted to the coarse setting, at least to a large degree~\cite{BonnetLPP26}. The coarse analogue of the Kuratowski-Wagner Theorem remains tantalizingly open~\cite{GeorgakopoulosP2025Graph}.

In this paper we add another positive example to this growing list. One of the most basic results of structural graph theory is the so-called \emph{block-cut tree}: every graph can be decomposed along its cut-vertices in a tree-like fashion into its $2$-connected components. We prove the following coarse analogue of this fact (see \cref{sec:prelims} for definitions):

\begin{theorem}\label{thm:block-cut-tree}
	For every positive integer $d$ and every connected graph $G$, there exists a tree-decomposition $(T, \beta)$ of $G$ such that
	\begin{itemize}
		\item each adhesion set of $(T, \beta)$ has weak diameter at most
		$3d + 2$; and
		\item 
		for any two vertices $u,v \in V(G)$ in a common bag of $(T, \beta)$, there is no set $S \subseteq V(G)$ with $\diam_G(S) \leq d$ and
		$\dist_G(S, \{ u, v\}) > d + 1$ that separates $u$ and $v$ in $G$.
	\end{itemize}
	Moreover, given $G$ and $d$, such a tree-decomposition can be computed in time $\Oh(n(n+m))$, where $n$ and $m$ denote the vertex and the edge count of $G$, respectively.
\end{theorem}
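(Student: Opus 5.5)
We build the decomposition recursively, by induction on $|V(G)|$ in a slightly more general form. We decompose a connected \emph{part} $H$ of $G$; distances and separations are always measured in $G$, and $H$ comes with an \emph{interface} $A$ of weak diameter at most $3d+2$ through which it attaches to the rest. The procedure is as follows. If no pair $u,v\in V(H)$ is separated in $G$ by a \emph{bad separator} (a set $S$ with $\diam_G(S)\le d$ and $\dist_G(S,\{u,v\})>d+1$), we output a single bag $V(H)\cup A$. Otherwise we pick a bad separator and split $H$ along it.

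The first step is to choose the separator so that the resulting adhesions are small. Given a bad separator $S$ for $u,v$, choose a vertex $s\in S$. Then $S\subseteq B(s,d)$. Rather than cutting along $S$ itself, we cut along a ``BFS layer'' around $s$: let $L=\{x : \dist_G(x,s)=r\}$ for a suitable radius $r\le d+1$, or more precisely the boundary of the component structure of $G-B(s,d)$. The natural candidate is $X=N[B(s,d)]$ restricted to the relevant components. Then $u$ and $v$ lie in different components of $G-B(s,d)$, because $B(s,d)\supseteq S$ and $\dist(s,\{u,v\})>d+1$.

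Each resulting piece is a component $C$ of $G - B(s,d)$ together with its attachment set $N(C)\subseteq \{x:\dist(x,s)=d\}$. The attachment lies within distance $d$ of $s$, so it has weak diameter at most $2d$. The central piece is $B(s,d)$ itself.

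The tree is then: a central bag containing the ball part, with children for each component. One subtlety is that the ball may intersect the old interface $A$. We handle this by adding $A$ into the relevant bag, which is what inflates the adhesion bound from $2d$ to $3d+2$. The old interface (diameter up to $3d+2$) must be inherited by exactly one child. The new adhesion is the union of the ball boundary with at most a controlled part of $A$; we argue it stays within $3d+2$ by cutting at radius chosen relative to the interface.

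The second step is to show that the central bag, after recursion, never contains a badly separated pair. Every vertex of $B(s,d)$ is within $d$ of $s$, and a bad separator would need distance greater than $d+1$ from both of its endpoints. We then argue that two vertices of $B(s,d)$ are connected through $s$-paths avoiding $S'$, unless $S'$ comes close to $s$, which forces $S'$ close to them. Here we also use the monotonicity fact that a separator bad for a pair inside a child part is bad in $G$. This is why recursing within $G$-distances keeps the second condition checkable bag by bag.

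Termination follows since each component is strictly smaller than $H$, which contains both $u$ and $v$ in different pieces. The running time comes from $n$ BFS computations, each costing $\Oh(n+m)$ to detect bad separators and compute components; this needs a near-linear test for the existence of a bad separator, for example via the coarse Menger duality.

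The main obstacle is the interaction between the ball and the inherited interface. We must keep adhesions of weak diameter at most $3d+2$ while guaranteeing that vertices placed in the central bag are not badly separated, since separators far from ball vertices may still separate them via interface vertices. I would first try choosing $s$ as the separator vertex nearest to $A$ and proving that the center-plus-interface bag is safe by this extremal choice.
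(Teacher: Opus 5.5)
\textbf{There is a genuine gap.} It sits exactly where you flag ``the main obstacle'': the proposal never says where the inherited interface $A$ goes, and the claim that this ``inflates the adhesion bound from $2d$ to $3d+2$'' is not derived from anything.

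In fact the recursive step fails on a simple example, whichever bad separator and vertex you choose. Let $Z$ and $Z_v$ be long cycles sharing only a vertex $s$, and let $a\in V(Z)$ with $\dist_G(a,s)\gg d$. Attach to $a$ a path of length $d$ ending at a vertex $s_0$ of a third long cycle $Z_0$.

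\begin{itemize}
\item Your procedure may first cut at $S=\{s_0\}$. Since $a\in B(s_0,d)$, one resulting part is $H=(Z-a)\cup Z_v$ with interface $A=\{a\}$.
\item The badly separated pairs of $H$ are $x\in V(Z)$, $y\in V(Z_v)$ far from $s$. Every bad separator for such a pair lies in $B(s,d)$: a separator avoiding $s$ must hit both arcs of $Z$ or both arcs of $Z_v$, and its points can only be close to each other via $s$.
\item Hence for every admissible $S$ and every $s'\in S$ we have $s\in B(s',d)$. So the two neighbours of $a$ on $Z$ fall into different components of $H-B(s',d)$.
\item Then $A$ cannot be handed to a single child. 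Putting it into the central bag creates an adhesion containing $a$ and a vertex within $2d$ of $s$. Its weak diameter is at least $\dist_G(a,s)-2d$, which is unbounded.
\end{itemize}

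Taking components of $G-B(s',d)$ instead, as you literally wrote, does not help. The component containing $a$'s neighbours also contains $s_0$ and $Z_0$, so it is not inside $H$ and the induction on $|V(H)|$ breaks. That piece would also need two interfaces, which your one-interface recursion cannot express. The missing idea is a canonical rule that orients every cut away from the side where the interface lies, together with a reason why the chosen cuts are nested.

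There are two smaller problems as well:
\begin{itemize}
\item Your leaf bag is $V(H)\cup A$, but you only test pairs inside $V(H)$. A separator at distance $>d+1$ from $a\in A$ is only guaranteed to be at distance $\ge d+1$ from a neighbour of $a$ in $H$, so pairs involving $A$ are not controlled.
\item The running-time claim is unsupported. Coarse Menger is only approximate (factor $c=129$), so it cannot decide the existence of a bad separator with the exact parameters $d$ and $d+1$.
\end{itemize}

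\textbf{How the paper gets orientation and nestedness.} It does not recurse at all. It fixes one BFS layering from a root $r$. The blocks of layer $i$ (classes of connectivity of layer-$i$ vertices in $G[\bigcup_{j\ge i}L_j]$) form a tree $T_{\mathcal{L}}$. The $(3d+2)$-compact blocks are the adhesions, and each component of wide blocks, together with its neighbouring compact blocks, forms a bag; so the adhesion bound is immediate.

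The second condition rests on one lemma. Suppose $\diam_G(S)\le d$, $\dist_G(S,u)>d+1$, and the block $B'$ of layer $\maxlayer_{\mathcal{L}}(S)+1$ on the path from $u$'s block to the root is $(3d+2)$-wide. Then:
\begin{itemize}
\item $B'$ contains a vertex $v^\star$ at distance more than $(3d+2-d)/2=d+1$ from $S$;
\item $u$ reaches $v^\star$ within layers beyond $\maxlayer_{\mathcal{L}}(S)$, hence avoiding $S$;
\item a shortest $v^\star$-$r$ path avoids $S$, because $S$ could only meet its first $d+2$ vertices, all within distance $d+1$ of $v^\star$.
\end{itemize}
Blocks strictly inside the $T_{\mathcal{L}}$-path between the blocks of two vertices sharing a bag are wide. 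Together with the easy case where $S$ lies above their common ancestor block, this yields the theorem. In your example, rooted in $Z_0$, this construction keeps $Z$ in one bag and separates $Z_v$ at a compact block next to $s$.
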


Since the coarse analogue of Menger's Theorem holds for two paths~\cite{AlbrechtsenHJKW2024AMenger,GeorgakopoulosP2025Graph}, the second condition in \cref{thm:block-cut-tree} can be reformulated to roughly the following: for any two vertices $u,v$ lying in the same bag and satisfying $\dist_G(u,v)\geq \Omega(d)$, there are two paths $P_1,P_2$ that connect the radius-$\Oh(d)$ balls around $u$ and~$v$, respectively, and are at distance $\Omega(d)$ from each other. See \cref{cor:block-cut-tree-dual} for a precise formulation.

The proof of \cref{thm:block-cut-tree} relies on a standard BFS-layering construction that has already been used in multiple previous contributions in coarse graph theory. The technique was perhaps best exemplified in the work of Berger and Seymour~\cite{BergerS2024Bounded}, who characterized graphs quasi-isometric to trees as those that admit a tree decomposition with bags of bounded weak diameter, and as those that exclude a certain obstruction called a \emph{loaded cycle}. The construction of Berger and Seymour is the following. Assuming without loss of generality that the considered graph $G$ is connected, we partition the vertices of $G$ into layers $L_0,L_1,L_2,\ldots$ according to the distances from some fixed vertex $r$. Each layer $L_i$ is subsequently partitioned into blocks, where vertices $u,v\in L_i$ are in the same block if they are connected in the graph induced by the union of layers $L_i,L_{i+1},L_{i+2},\ldots$. Importantly, the adjacency relation between all the blocks of all the layers induces a tree; this already provides a rough skeleton of a tree-decomposition. Berger and Seymour observe that in the absence of a heavily loaded cycle, every block in every layer has a bounded weak diameter, and thus the whole tree of blocks can be easily turned into a tree-decomposition of $G$ whose bags have bounded weak diameter. What we observe is that when the construction is applied to any graph, the blocks that do have bounded diameter are suitable ``coarse cut-vertices'', along which the graph can be decomposed into its ``coarse $2$-connected components''. 

We remark that a statement very similar to \cref{thm:block-cut-tree} has been independently proven by Albrechtsen and Georgakopoulos~\cite{AG26}.

\section{Preliminaries}\label{sec:prelims}

All graphs considered in this paper are undirected, finite, and simple. For $k\in \N$, we write $[k]\coloneqq \{1,\ldots,k\}$ and $[0,k]\coloneqq \{0,1,\ldots,k\}$.

\paragraph{Graphs.}

We use standard graph notation.
Given a graph $G$, we denote its set of vertices as $V(G)$ and its set of edges as $E(G)$.
An edge connecting vertices $u$ and $v$ is denoted as $uv$. 
For a subset of vertices $A$, we write $G[A]$ for the subgraph of $G$ \emph{induced} by $A$, which consists of all the vertices of $A$ and all the edges with both endpoints in $A$. 
The graph induced by the vertex subset $V(G)\setminus A$ is denoted by $G-A$.
We say that a vertex subset $S\subseteq V(G)$ \emph{separates} two vertices $u,v\notin S$ if $u$ and $v$ belong to distinct connected components of $G-S$.

\paragraph{Paths, distances, and neighborhoods.}

A $u$-$v$ \emph{path} in a graph $G$ is any path in $G$ with endpoints $u$ and~$v$.
For two subsets of vertices $S,\ T\subseteq V(G)$, an $S$-$T$ \emph{path} in $G$ is an $s$-$t$ path in $G$ for some $s\in S,\ t\in T$.
The \emph{distance metric} in $G$, defined between pairs of vertices $u, v$ of $G$ as the length of the shortest $u$-$v$ path in $G,$ will be denoted by  $\dist_G(u,v).$ If $A,B$ are vertex subsets or subgraphs of $G$, then $\dist_G(A,B)$ is the minimum of $\dist_G(a,b)$ for $a$ belonging to $A$ and $b$ belonging to $B$.

By $N(v)$ we denote the \emph{open neighborhood} of $v$ in $G$, defined as the set of vertices of $G$ which are adjacent to $v$. This notion is extended to any vertex subset $A \subseteq V(G)$ as follows: $N(A) \coloneqq \bigcup_{u\in A} N(u) \setminus A$. 
For $d\in \N$, the \emph{distance-$d$ neighborhoods} of $v$ and of $A$ are defined as
\[N^d[v] \coloneqq \{u\in V(G) \mid \dist_G(u,v)\leq d\}\qquad\textrm{and}\qquad N^d[A]\coloneqq \bigcup_{v\in A} N^d[v],\]
respectively.
Note that $N^1[v]=\{v\}\cup N(v)$. The \emph{weak diameter} of a vertex subset $S\subseteq V(G)$ is defined~as
\[\diam_G(S)\coloneqq \max_{u,v\in S}\ \dist_G(u,v).\]




For a connected graph $G$ and a subset of vertices $A\subseteq V(G)$, the \emph{$\mathsf{BFS}$-layering} of $G$ from $A$ is the partition of $V(G)$ into disjoint layers $L_0, L_1, L_2,\ldots,L_h$ so that $L_0=A$ and for every $i\geq 1$, $$L_i=N^i[A]\setminus N^{i-1}[A]=\{u\in V(G)\mid \dist_G(u,A)=i\}.$$
Note that for every $i\geq 1$ we have $L_i=N(L_{i-1})\setminus \bigcup_{j<i-1} L_j$, and in particular every vertex of $L_i$ has a neighbor in $L_{i-1}$. Moreover, whenever $uv$ is an edge of $G$ and $i,j$ are such that $u\in L_i$ and $v\in L_j$, we have $|i-j|\leq 1$.

\paragraph{Tree-decompositions.}

Given a graph $G$, we say that a pair $(T, \beta)$ is a \emph{tree decomposition} of $G$ if $T$ is a tree and $\beta$ is a function assigning each node $x$ of $T$ a subset of $\beta(x)\subseteq V(G)$, called the \emph{bag} of $x$, so that the following conditions hold:
\begin{itemize}
    \item for every vertex $u\in V(G)$, the set of nodes of $T$ whose bags contain $u$ induces a~non-empty connected subtree of $T$; and
    \item for every edge $uv\in E(G)$, there exists a node $x\in V(T)$ such that $u, v\in \beta(x)$.
\end{itemize}
Given two adjacent nodes $x$ and $y$ of $T$, the set $\beta(x)\cap \beta(y)$ is called the \emph{adhesion set} of the edge $xy \in E(T).$
The \emph{adhesion sets} of $(T,\beta)$ are the adhesion sets associated with the edges of $T$.
\section{A coarse analogue of the block-cut tree}


In this section we prove our main result, \cref{thm:block-cut-tree}.
We proceed to explain how to define the desired tree-decomposition and subsequently we verify that it satisfies the desired properties.

\paragraph{$\mathsf{BFS}$-layering trees.} Let us first recall the construction that originates from the work of Berger and Seymour~\cite{BergerS2024Bounded} on bounded-diameter tree-decompositions.

\medskip
Suppose $G$ is a connected graph and suppose that $\mathcal{L} \coloneqq \{ L_{0}, L_{1}, \ldots, L_{h} \}$, for some $h \in \mathbb{N}$, is the $\mathsf{BFS}$-layering of $G$ from an arbitrarily chosen (root) vertex $r \in V(G)$.

With $\mathcal{L}$ fixed in the context, we refer to the set $L_{i}$, $i \in [0, h]$, as the \emph{$i$-th layer} of $G$.
Given $i \in [0, h]$, we define relation $\sim_i$ on the set $L_i$ by declaring $u \sim_i v$ if there exists a $u$-$v$ path in $G[\bigcup_{j\ge i} L_j]$.
It is easy to see that this is an equivalence relation; hence $\sim\coloneqq \bigcup_i \sim_i$ is an equivalence relation on $V(G)$.
We call the equivalence classes of $\sim_i$ the \emph{blocks of layer $i$}, whereas the equivalence classes of $\sim$ are the \emph{blocks of $\mathcal{L}$}.

Let $T_{\mathcal{L}}$ be the graph whose vertices are the blocks of $\mathcal{L}$ and where two blocks $B,B'$ are adjacent if some vertex of $B$ is adjacent in $G$ to some vertex of $B'$.

We proceed with two basic observations about the graph $T_{\mathcal{L}}$.

The first one shows that all the edges of $G$ connecting a block of some layer to the previous layer in $\mathcal{L}$ lead to the same block; which shows that $T_{\mathcal{L}}$ is a tree.

\begin{observation}\label{obs:parent_block} Let $u',v'\in L_{i-1}$ and $u,v\in L_i$ with $uu', vv'\in E(G)$, for some $i \in [h]$.
Suppose $u$ and $v$ belong to the same block of layer $i$. Then $u'$ and $v'$ belong to the same block of layer $i-1$.
Consequently, the graph $T_{\mathcal{L}}$ is a tree.
\end{observation}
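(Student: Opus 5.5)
For the first claim, I take a $u$-$v$ path $P$ in $G[\bigcup_{j\ge i} L_j]$, which exists because $u\sim_i v$. Concatenating $u'u$, then $P$, then $vv'$ gives a $u'$-$v'$ walk. All its vertices lie in layers $j\geq i-1$: its interior is in $\bigcup_{j\ge i}L_j$, and its endpoints are in $L_{i-1}$. Such a walk contains a $u'$-$v'$ path in $G[\bigcup_{j\ge i-1}L_j]$, so $u'\sim_{i-1}v'$.

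For the tree claim, I first record some basic facts.
\begin{itemize}
\item Every block is contained in a single layer.
\item Edges of $G$ join vertices whose layers differ by at most one.
\item Two distinct blocks of the same layer $i$ are never adjacent in $T_{\mathcal{L}}$: an edge between them is itself a path in $G[\bigcup_{j\ge i}L_j]$, which would merge them.
\end{itemize}
Hence every edge of $T_{\mathcal{L}}$ joins a block of some layer $i\ge 1$ to a block of layer $i-1$. By the first part, for each block $B$ of layer $i\geq 1$, all its neighbours in layer $i-1$ lie in a single block, which I call the parent $p(B)$. The parent exists because every vertex of $L_i$ has a neighbour in $L_{i-1}$.

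Thus every block of layer $i\ge1$ has exactly one neighbour in a lower layer, and every edge of $T_{\mathcal{L}}$ is of the form $B\,p(B)$. Layer $0$ is $\{r\}$, a single block. I then count: the number of edges of $T_{\mathcal{L}}$ equals the number of non-root blocks, which is $|V(T_{\mathcal{L}})|-1$. Following parents from any block strictly decreases the layer and ends at $\{r\}$, so $T_{\mathcal{L}}$ is connected. A connected graph with $|V|-1$ edges is a tree.

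The only mildly delicate point is the walk-to-path step and the layer bookkeeping. Everything else is direct.
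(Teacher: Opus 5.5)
Your proof is correct and takes essentially the same approach as the paper. The first part is the identical walk-concatenation argument. For the tree claim, the paper only notes that each block has a unique neighbour block in the previous layer; your parent map, edge count ($|V(T_{\mathcal{L}})|-1$ edges) and connectivity argument spell this out, including the step the paper leaves implicit, that distinct blocks of the same layer are never adjacent.
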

\begin{proof} Since $u$ and $v$ belong to the same block of layer $i$, there exists a path connecting them in $G[\bigcup_{j\ge i} L_j]$.
Appending the edges $uu'$ and $v,v'$ yields a $u'$-$v'$ walk in $G[\bigcup_{j\ge i-1} L_j]$, showing that $u'$ and $v'$ belong to the same block. Then this block is the unique neighbor in layer $L_{i-1}$ of the block of $u,v$, which implies that $T_{\mathcal{L}}$ is a tree.
\end{proof}

From now on we consider the tree $T_{\mathcal{L}}$ to be rooted at the 
the unique block $L_{0} = \{ r \}$ of layer $0$. This naturally imposes the parent--child and ancestor--descendant relations on the blocks.
The second observation shows that for any two vertices in different blocks that are in the ancestor--descendant relation in $T_{\mathcal{L}}$, there is a path between them in $G$ that avoids all the earlier layers.

\begin{observation}\label{obs:path_to_descendants} Let $B$ be a block of layer $i$ and let $B'$ be a descendant of $B$ in $T_{\mathcal{L}}$.
Then, for any two vertices $u\in B$ and $v\in B'$, there exists a $u$-$v$ path in $G[\bigcup_{j\ge i} L_j]$.
\end{observation}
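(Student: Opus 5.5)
We will prove \cref{obs:path_to_descendants} by induction on the depth difference between $B$ and $B'$ in $T_{\mathcal{L}}$. Let $B'$ be a block of layer $i'\ge i$. If $B'=B$, the statement is just the definition of $\sim_i$: $u\sim_i v$ gives a $u$-$v$ path in $G[\bigcup_{j\ge i}L_j]$.

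For the inductive step, let $B'\neq B$ be a descendant of $B$ and let $B''$ be the parent of $B'$. Then $B''$ is a descendant of $B$ (possibly $B$ itself), and it is a block of layer $i'-1\ge i$. First we check that parents lie exactly one layer up. By \cref{obs:parent_block} and the remark that every vertex of $L_{i'}$ has a neighbor in $L_{i'-1}$, the parent of a block of layer $i'\ge 1$ is the unique block of layer $i'-1$ adjacent to it. Since blocks of the same layer are never adjacent (adjacent vertices of $L_{i'}$ would be $\sim_{i'}$-equivalent), the rooted structure places parents exactly one layer up. In particular, $B''$ contains a vertex $w'$ adjacent to some vertex $w\in B'$.

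Now fix $u\in B$ and $v\in B'$. By the induction hypothesis, there is a $u$-$w'$ path in $G[\bigcup_{j\ge i}L_j]$. We extend it by the edge $w'w$, and then by a $w$-$v$ path inside $G[\bigcup_{j\ge i'}L_j]$, which exists because $w\sim_{i'} v$. All vertices used lie in layers $\ge i$, since $i'\ge i$. The result is a $u$-$v$ walk in $G[\bigcup_{j\ge i}L_j]$, which contains a $u$-$v$ path.

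The only mildly delicate point is justifying that the parent of $B'$ lies in layer $i'-1$ and is adjacent via an actual edge, so that the depth in $T_{\mathcal{L}}$ coincides with the layer index. This follows since the root is the layer-$0$ block and every tree edge joins consecutive layers: same-layer adjacent blocks would coincide, and edges only join layers differing by at most one. Hence the path from the root to $B'$ in $T_{\mathcal{L}}$ visits one block per layer, which justifies the induction.
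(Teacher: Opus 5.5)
Your proof is correct, but it takes a different route from the paper. The paper does not induct along $T_{\mathcal{L}}$. It fixes a shortest $v$-$r$ path $P$, which drops exactly one layer per step, and lets $w$ be the first vertex of $P$ in $L_i$. The prefix of $P$ up to $w$ lies in $G[\bigcup_{j\ge i}L_j]$ automatically, iterating \cref{obs:parent_block} along $P$ places $w$ in $B$, and a single $w$-$u$ path coming from $w\sim_i u$ finishes the argument.

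So the paper uses the internal connectivity of a block only once (for $B$) and gets the descent through all intermediate layers in one stroke from the geodesic. You instead rebuild that descent block by block, spending one $G$-edge per tree edge plus the connectivity of every intermediate block. The paper's argument is shorter. Yours has the merit of making explicit that the depth of a block in $T_{\mathcal{L}}$ equals its layer index, which the paper uses only tacitly when concluding $w\in B$.

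Your justification of that fact in the last paragraph is slightly loose, though: edges joining consecutive layers do not by themselves make the root path monotone. The clean argument goes as follows. Repeatedly pass from the current block to an adjacent block one layer up; this is possible since every vertex of $L_k$ has a neighbor in $L_{k-1}$. This gives a walk from $B'$ to $\{r\}$ whose vertices are pairwise distinct, because they lie in distinct layers. Since $T_{\mathcal{L}}$ is a tree, this walk is the unique $B'$-$\{r\}$ path. Hence the parent of $B'$ lies in layer $i'-1$, and the ancestor $B$ lies on this path.
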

\begin{proof} Consider a shortest $v$-$r$ path in $G$, say $P$.
Noting that $P$ must intersect layer $L_i$, let $w$ be the first vertex encountered on $P$ that belongs to $L_i$. Then the prefix of $P$ from $v$ to $w$ is entirely contained in $G[\bigcup_{j\ge i}L_j]$, and by \cref{obs:parent_block} we have that $w\in B$.
Since $u\in B$ as well, $u$ and $w$ belong to the same block of layer $i$ and hence there also exists a $w$-$u$ path in $G[\bigcup_{j\ge i}L_j]$.
Concatenating these paths gives a $u$-$v$ walk in $G[\bigcup_{j\ge i}L_j]$ and therefore also the desired $u$-$v$ path.
\end{proof}

\paragraph{Coarse block-cut tree-decomposition.}

Next, given a positive integer $d$, we say that a block of $\mathcal{L}$ is \emph{$d$-compact} if it has (weak) diameter at most $d$.
We also say that a block of $\mathcal{L}$ is \emph{$d$-wide} if it is not $d$-compact. We let $\mathcal{C}^d$ be the set of all $d$-compact blocks of $\mathcal{L}$

We are now in the position to define a graph $T^d_{\mathcal{L}}$ and a function $\beta^d_{\mathcal{L}} \colon V(T^d_{\mathcal{L}}) \to 2^{V(G)}$ as follows.
\begin{itemize}
\item $V(T^d_{\mathcal{L}})$ consists of the following nodes:
\begin{itemize}
\item A node $x_{B}$ for each $d$-compact block $B\in \mathcal{C}^d$. We set $\beta^d_{\mathcal{L}}(x_{B}) \coloneqq B$.
\item A node $x_{BB'}$ for each edge of $T_{\mathcal{L}}$ connecting two $d$-compact blocks $B,B'\in \mathcal{C}^d$. We set $\beta^d_{\mathcal{L}}(x_{BB'}) \coloneqq B \cup B'$.
\item A node $x_{C}$ for each connected component $C$ of $T_{\mathcal{L}}-\mathcal{C}^d$. We set $\beta^d_{\mathcal{L}}(x_{C})$ to be the union of $\bigcup_{B \in C} B$ and all the $d$-compact blocks of $\mathcal{L}$ adjacent to $C$ in $T_{\mathcal{L}}$.
\end{itemize}
\item $E(T^d_{\mathcal{L}})$ consists of the following edges:
\begin{itemize}
\item For each edge of $T_{\mathcal{L}}$ connecting two $d$-compact blocks $B,B'\in \mathcal{C}^d$, $E(T^d_{\mathcal{L}})$ contains the edges $u_{B}u_{BB'}$ and $u_{BB'}u_{B'}$.
\item For each connected component $C$ of $T_{\mathcal{L}}-\mathcal{C}^d$, $E(T^d_{\mathcal{L}})$ contains the edge $u_{C}u_{B}$, for each $d$-compact block of $\mathcal{L}$ adjacent to $C$ in $T_{\mathcal{L}}$.
\end{itemize}
\end{itemize}

In the following we observe that the pair $(T^d_{\mathcal{L}}, \beta^d_{\mathcal{L}})$ is a valid tree-decomposition of $G$ with adhesion sets of weak diameter at most $d$.

\begin{lemma}\label{obs:valid_decomposition} $(T^d_{\mathcal{L}}, \beta^d_{\mathcal{L}})$ is a tree-decomposition of $G$ with adhesion sets of weak diameter at most $d$.
\end{lemma}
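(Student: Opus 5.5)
We verify the three requirements in turn: $T^d_{\mathcal{L}}$ is a tree, the two axioms of a tree-decomposition hold, and every adhesion set has weak diameter at most $d$.

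\textbf{$T^d_{\mathcal{L}}$ is a tree.} By \cref{obs:parent_block}, $T_{\mathcal{L}}$ is a tree. The graph $T^d_{\mathcal{L}}$ arises from $T_{\mathcal{L}}$ in two steps. First, each component $C$ of $T_{\mathcal{L}}-\mathcal{C}^d$ is contracted to a single node $x_C$. Second, each edge of $T_{\mathcal{L}}$ between two compact blocks is subdivided by the node $x_{BB'}$. Contracting connected subtrees of a tree and subdividing edges both preserve being a tree. One point needs checking: a compact block $B$ adjacent to $C$ has exactly one edge into $C$. This holds because $C$ is connected and $T_{\mathcal{L}}$ is acyclic, so two edges from $B$ into $C$ would close a cycle. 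Hence the contraction creates no parallel edges, and $T^d_{\mathcal{L}}$ is indeed a tree.

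\textbf{Tree-decomposition axioms.} Every edge $uv$ of $G$ lies either inside one block or between two blocks adjacent in $T_{\mathcal{L}}$, since the blocks partition $V(G)$. We check that some bag contains both endpoints:
\begin{itemize}
\item if both blocks are compact, the bag is $x_B$ or $x_{BB'}$;
\item if some involved block is wide, it lies in a component $C$, and $\beta^d_{\mathcal{L}}(x_C)$ contains all blocks of $C$ together with all compact blocks adjacent to $C$.
\end{itemize}
For vertex coverage and connectivity, fix a vertex $u$ and let $B$ be its block. If $B$ is wide, then $u$ appears only in $\beta^d_{\mathcal{L}}(x_C)$, where $C$ is the component containing $B$; this is a single node. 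If $B$ is compact, then $u$ appears in the following bags:
\begin{itemize}
\item $x_B$;
\item $x_{BB'}$ for each compact neighbor $B'$ of $B$;
\item $x_C$ for each component $C$ adjacent to $B$.
\end{itemize}
Each of these nodes is adjacent to $x_B$ in $T^d_{\mathcal{L}}$, so they form a star centered at $x_B$, which is connected.

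\textbf{Adhesion sets.} Every edge of $T^d_{\mathcal{L}}$ has the form $x_Bx_{BB'}$ or $x_Cx_B$ with $B$ compact. The adhesion set equals $B$, once we note two facts. First, blocks are pairwise disjoint. Second, $\beta^d_{\mathcal{L}}(x_C)$ contains no vertex of $B$ other than those of $B$ itself. Since $B$ is $d$-compact, its weak diameter is at most $d$.

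The only mildly delicate step is the tree structure under contraction. It follows from acyclicity of $T_{\mathcal{L}}$ as described above, so no real obstacle is expected.
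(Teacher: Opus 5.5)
Your proof is correct and follows the paper's argument essentially step by step: $T^d_{\mathcal{L}}$ is a tree, the bags containing a vertex of a compact block $B$ form a star around $x_B$, a vertex of a wide block lies only in $x_C$, and the adhesion sets are the compact blocks. You also give more detail than the paper on why $T^d_{\mathcal{L}}$ is a tree, via the contraction-and-subdivision view and the single-edge check; the only blemish is that your ``two facts'' in the adhesion step are not needed, since every edge of $T^d_{\mathcal{L}}$ has an endpoint $x_B$ whose bag is exactly $B$, and $B$ is contained in the other bag.
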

\begin{proof} Recall that $T_{\mathcal{L}}$ is a tree, as shown in \cref{obs:parent_block}. It is then straightforward to see that by the construction, $T^d_{\mathcal{L}}$ is also a tree.

It remains to verify the two conditions from the definition of tree-decompositions.
First, observe that every vertex of $G$ belong to some bag, since it belongs to some block of $\mathcal{L}$ and every block of $\mathcal{L}$ is contained in some bag by construction.
Moreover, for every vertex $v \in V(G)$, the bags of containing $v$ form a subtree of $T^d_{\mathcal{L}}$: If $v$ belongs to a $d$-wide block, then it belongs only to the bag containing the vertices of the connected component of $T_{\mathcal{L}}-\mathcal{C}^d$ that contains $v$.
Otherwise, $v$ belongs to some $d$-compact block $B$ and therefore it belongs only to the bag $\beta^d_{\mathcal{L}}(x_{B})$ and the bags of the neighbors of $x_{B}$ in $T^d_{\mathcal{L}}$.

Next, for every edge $uv \in E(G)$, by definition of $\mathcal{L}$, the blocks of $\mathcal{L}$ containing $u$ and $v$ are equal or adjacent in $T_{\mathcal{L}}$. By construction, adjacent blocks are contained in a common bag of $(T^d_{\mathcal{L}}, \beta^d_{\mathcal{L}})$.
Hence, every edge of $G$ is contained in some bag. This verifies that $(T^d_{\mathcal{L}}, \beta^d_{\mathcal{L}})$ is tree-decomposition of $G$.

It remains to verify that each adhesion set of $(T^d_{\mathcal{L}}, \beta^d_{\mathcal{L}})$ has weak diameter at most $d$.
This follows from the construction, because the adhesion sets of $(T^d_{\mathcal{L}}, \beta^d_{\mathcal{L}})$ are precisely the $d$-compact blocks of $\mathcal{L}$.
\end{proof}

Given a $\mathsf{BFS}$-layering $\mathcal{L}$ of $G$ and a positive integer $d$, we call the tree-decomposition $(T^d_{\mathcal{L}}, \beta^d_{\mathcal{L}})$ defined as above, the \emph{$d$-coarse block-cut tree-decomposition} of $G$ with respect to $\mathcal{L}$. Let us note that this tree-decomposition can be computed efficiently.

\begin{observation}\label{obs:algo}
	Given $G$, $d$, and the $\mathsf{BFS}$-layering $\mathcal{L}$, the $d$-coarse block-cut tree-decomposition of $G$ can be computed in time $\Oh(n(n+m))$, where $n$ and $m$ denote the vertex and the edge count of $G$.
\end{observation}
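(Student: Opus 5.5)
The plan is to compute each ingredient of the construction with a constant number of graph searches per vertex or per layer, so that the total running time is dominated by the diameter computations.

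\textbf{Computing the blocks.} The blocks of each layer $i$ are the intersections of $L_i$ with the connected components of $G[\bigcup_{j\ge i} L_j]$. Computing these components separately for each $i$ would cost $\Oh(h(n+m))\subseteq \Oh(n(n+m))$, which is already within budget. Alternatively, all blocks can be obtained in near-linear time by processing the layers from $h$ down to $0$ with a union--find structure: when layer $i$ is added, we union along all edges with both endpoints in $\bigcup_{j\ge i}L_j$ that have an endpoint in $L_i$. Either way, the next steps are immediate.

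\textbf{Building $T_{\mathcal{L}}$.} The tree $T_{\mathcal{L}}$ is obtained by scanning all edges of $G$ and recording, for each edge between different blocks, the pair of blocks it connects. By \cref{obs:parent_block} this yields a tree on at most $n$ nodes. A bucket sort, or simply marking the parent of each block, removes duplicate pairs in $\Oh(n+m)$ time.

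\textbf{Deciding $d$-compactness.} This is the main cost. We run a BFS in $G$ from every vertex $v$, which takes $\Oh(n(n+m))$ in total and gives all distances $\dist_G(v,\cdot)$. From these we compute, for each block $B$, the value $\max_{u\in B}\dist_G(v,u)$ for every $v\in B$ by scanning the distance array restricted to $B$. Since the blocks partition $V(G)$, this scanning costs $\sum_B |B|^2\le n^2$ overall. Thus the weak diameter of every block, and hence membership in $\mathcal{C}^d$, is known in $\Oh(n(n+m))$ time.

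\textbf{Assembling the decomposition.} Once $\mathcal{C}^d$ is known, we compute the components of $T_{\mathcal{L}}-\mathcal{C}^d$ by a traversal of the tree in $\Oh(n)$ time. We then create the nodes $x_B$, $x_{BB'}$ and $x_C$ together with their edges as in the definition. The bags are written explicitly:
\begin{itemize}
\item each vertex lies in its own block's bag;
\item each compact block appears additionally in the bags of its neighbours in $T^d_{\mathcal{L}}$.
\end{itemize}
The total bag size is therefore at most $\sum_{B} |B|\cdot(\deg_{T_{\mathcal{L}}}(B)+1)\le n\cdot n$, which is $\Oh(n^2)$.

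The only step requiring care is the diameter test, since weak diameter is measured in $G$ rather than in $G[B]$. The all-sources BFS described above handles this within the stated bound, and no step exceeds $\Oh(n(n+m))$.
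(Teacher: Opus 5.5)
Your proposal is correct and follows essentially the same route as the paper. Both use a BFS from every vertex to get all distances in $G$, which is the dominant $\Oh(n(n+m))$ cost; both use graph searches in the subgraphs $G[\bigcup_{j\ge i}L_j]$ to obtain the blocks and $T_{\mathcal{L}}$; both use an $\Oh(n^2)$ scan to decide $d$-compactness and a traversal of $T_{\mathcal{L}}$ to assemble the decomposition. The differences are minor refinements: you search once per layer (or use union--find) rather than once per vertex, and you explicitly bound the total bag size by $\Oh(n^2)$, which is more careful than the paper's $\Oh(n)$ figure for the final assembly when bags are written out explicitly.
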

\begin{proof}
	By applying breadth-first search from every vertex of $G$, in time $\Oh(n(n+m))$ we may compute the distance between every pair of vertices of $G$. Also, if for every vertex $u\in V(G)$ we apply breadth-first search from $u$ in the subgraph induced by the layer of $u$ and all the subsequent layers, then in total time $\Oh(n(n+m))$ we may compute the equivalence relation $\sim$, so also all the blocks of $\mathcal{L}$ and the tree $T_{\mathcal{L}}$. By combining the information above, we may in time $\Oh(n^2)$ recognize which blocks of $\mathcal{L}$ are $d$-compact and which are $d$-wide. Given this, it is straightforward to construct the tree-decomposition $(T^d_{\mathcal{L}}, \beta^d_{\mathcal{L}})$ in time $\Oh(n)$ using a depth-first search on $T_{\mathcal{L}}$.
\end{proof}

\paragraph{Towards the proof of \cref{thm:block-cut-tree}.}

To complete the proof of \cref{thm:block-cut-tree}, it remains to show that no bag of a $d$-coarse block-cut tree-decomposition contains a pair of vertices separable by a separator of small diameter (second condition).
To this end, we first identify two cases in which a vertex subset of small diameter does not separate a vertex from the root of the $\mathsf{BFS}$-layering.

For a given set of vertices $S \subseteq V(G)$, we define $\maxlayer_{\mathcal{L}}(S)$ to be the deepest layer containing a vertex of $S$, i.e., $$\maxlayer_{\mathcal{L}}(S) \coloneqq \max\{ i \in [h] \mid L_{i} \cap S \neq \emptyset \}.$$
Analogously, we define $\minlayer(S)$ as the minimum layer containing a vertex of $S$, i.e., $$\minlayer_{\mathcal{L}}(S) \coloneqq \min\{ i \in [h] \mid L_{i} \cap S \neq \emptyset \}.$$
For a block $B$ of $\mathcal{L}$, we have $\maxlayer_{\mathcal{L}}(B) = \minlayer_{\mathcal{L}}(B)$; we shall denote this common value by $\layer_{\mathcal{L}}(B)$.
For a vertex $v \in V(G)$, we define $\layer_{\mathcal{L}}(v) \coloneqq i \in [h]$, where $v \in L_{i}$.

We proceed by proving two important lemmas.

\begin{lemma}\label{lem:S_close}
Suppose $v \in V(G)$ and $B$ is the block of $\mathcal{L}$ containing $v$.
Suppose further that $S \subseteq V(G)$ is a vertex subset such that $\diam_{G}(S) \le d$,
$\dist_G(v, S) > d + 1$, and $\maxlayer_{\mathcal{L}}(S) \ge \layer_{\mathcal{L}}(v) - 1$. Then there exists a $v$-$r$ path in $G - S$.
\end{lemma}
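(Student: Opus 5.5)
The plan is to show that \emph{any} shortest $v$-$r$ path in $G$ already avoids $S$. The argument compares the layer indices of vertices on such a path with the layer index of a deepest vertex of $S$, and then uses the bound $\diam_G(S)\le d$.

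First, fix a shortest $v$-$r$ path $P$ in $G$ and write $\ell \coloneqq \layer_{\mathcal{L}}(v)$, so that $\dist_G(v,r)=\ell$. Along a geodesic to the root, the layer index drops by exactly one at each step. Hence the vertex of $P$ at distance $k$ from $v$ lies in layer $L_{\ell-k}$.

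Second, suppose for contradiction that $P$ meets $S$ in some vertex $s$, and let $k \coloneqq \dist_G(v,s)$. The hypothesis $\dist_G(v,S) > d+1$ gives $k \ge d+2$. Therefore $\layer_{\mathcal{L}}(s) = \ell - k \le \ell - d - 2$.

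Third, by the hypothesis $\maxlayer_{\mathcal{L}}(S) \ge \ell - 1$, there is a vertex $t \in S$ with $\layer_{\mathcal{L}}(t) \ge \ell-1$. Every edge of $G$ joins vertices whose layers differ by at most one (noted in the preliminaries). Consequently, for any two vertices $a,b$ we have $\dist_G(a,b) \ge |\layer_{\mathcal{L}}(a)-\layer_{\mathcal{L}}(b)|$. Applying this to $s$ and $t$ gives
\[
\dist_G(s,t) \ge (\ell-1) - (\ell-d-2) = d+1 > d.
\]
This contradicts $\diam_G(S) \le d$, since both $s$ and $t$ lie in $S$. So $P$ is disjoint from $S$ and is the required $v$-$r$ path in $G-S$.

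There is no real obstacle here. The only point needing care is the bookkeeping of the constants: the bound $d+1$ in $\dist_G(v,S)>d+1$ combines with the slack $-1$ in $\maxlayer_{\mathcal{L}}(S)\ge\layer_{\mathcal{L}}(v)-1$ to give exactly $d+1>d$. The Lipschitz property of the layering must be applied to the pair $s,t$, not to the pair $v,s$.
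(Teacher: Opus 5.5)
Your proof is correct and follows essentially the paper's argument. Both take a shortest $v$-$r$ path whose vertex at distance $k$ from $v$ lies in layer $\layer_{\mathcal{L}}(v)-k$, and use $\diam_G(S)\le d$ with the layer-Lipschitz property to force $S$ into layers at least $\layer_{\mathcal{L}}(v)-(d+1)$; the only difference is that the paper states this directly as $\minlayer_{\mathcal{L}}(S)\ge\maxlayer_{\mathcal{L}}(S)-d$, while you argue by contradiction with an explicit pair $s,t\in S$.
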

\begin{proof}
Let $P$ be a shortest $v$-$r$ path in $G$.
If we enumerate the vertices of $P$ starting from $v$, the $i$-th vertex of $P$ belongs to layer $\layer_{\mathcal{L}}(v) - (i-1)$.
Since $\diam_{G}(S) \le d$, we have
\[
\minlayer_{\mathcal{L}}(S) \ge \maxlayer_{\mathcal{L}}(S) - d \ge \layer_{\mathcal{L}}(v)-(d+1).
\]
Consequently, $S$ can intersect $P$ only within the first $d+2$ vertices of $P$, that is, within those vertices whose distance from $v$ is at most $d+1$.
However, since
$\dist_G(S, v) > d + 1$ by assumption,
this is not the case and $P$ is a path in $G - S$.
\end{proof}

\begin{lemma}\label{lem:path_to_root} Let $v\in V(G)$ and $B$ be the block of $\mathcal{L}$ containing $v$.
Suppose that $S\subset V(G)$ is a vertex subset such that $\diam_G(S)\leq d$,
$\dist_G(v, S) > d + 1$, and $\maxlayer_\mathcal{L}(S) < \layer_{\mathcal{L}}(B)-1$.
Suppose further that $B'$ is the block of layer $\maxlayer_{\mathcal{L}}(S) + 1$ on the unique $B$-$\{ r\}$ path in $T_{\mathcal{L}}$ and $B'$ is 
$(3d + 2)$-wide.
Then there exists a $v$-$r$ path in $G - S$.
\end{lemma}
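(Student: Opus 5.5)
Let $k \coloneqq \maxlayer_{\mathcal{L}}(S)$, so $B'$ is a block of layer $k+1$ and every vertex of $S$ lies in layers at most $k$. The idea is to walk from $v$ to the wide block $B'$ inside the deep part of the graph, which $S$ cannot touch, and then leave $B'$ towards the root through a vertex far from $S$.

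First, I will get $v$ to $B'$ avoiding $S$. Since $B'$ is an ancestor of (or equal to) $B$ in $T_{\mathcal{L}}$, \cref{obs:path_to_descendants} gives, for any $w \in B'$, a $w$-$v$ path in $G[\bigcup_{j \ge k+1} L_j]$. This subgraph is disjoint from $S$ because $\maxlayer_{\mathcal{L}}(S) = k$. So every vertex of $B'$ is connected to $v$ in $G - S$, and it suffices to find one $w \in B'$ with a $w$-$r$ path in $G - S$.

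Next, I will find a vertex $w \in B'$ far from $S$. Because $B'$ is $(3d+2)$-wide, there are $a, b \in B'$ with $\dist_G(a,b) > 3d+2$. If both were within distance $d+1$ of $S$, the triangle inequality together with $\diam_G(S) \le d$ would give $\dist_G(a,b) \le (d+1) + d + (d+1) = 3d+2$, a contradiction. Hence one of them, call it $w$, satisfies $\dist_G(w,S) > d+1$.

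Finally, I will connect $w$ to the root. The vertex $w$ lies in layer $k+1$, and $\maxlayer_{\mathcal{L}}(S) = k \ge \layer_{\mathcal{L}}(w) - 1$. So \cref{lem:S_close} applies to $w$ and yields a $w$-$r$ path in $G - S$. Concatenating the $v$-$w$ path with this $w$-$r$ path gives a $v$-$r$ walk in $G - S$, and hence a $v$-$r$ path.

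The main subtlety is the choice of the threshold $3d+2$: it is exactly what makes the triangle-inequality step produce a vertex at distance more than $d+1$ from $S$. Once that vertex exists, the hypothesis of \cref{lem:S_close} is met precisely because $B'$ sits one layer below $\maxlayer_{\mathcal{L}}(S)$. The hypothesis $\dist_G(v,S) > d+1$ is not actually needed in this argument.
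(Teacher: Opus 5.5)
Your proof is correct and follows the paper's argument essentially step for step. You pick a vertex of the $(3d+2)$-wide block $B'$ at distance more than $d+1$ from $S$, reach it from $v$ inside the layers $\geq \maxlayer_{\mathcal{L}}(S)+1$ via \cref{obs:path_to_descendants}, and finish with \cref{lem:S_close}; your remark that the hypothesis $\dist_G(v,S) > d+1$ is never used is accurate, and the paper's proof does not use it either.
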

\begin{proof}
Since $B'$ is 
$(3d + 2)$-wide, it contains two vertices at distance more than
$3d + 2$ from each other.
Since $\diam(S) \leq d$, one of these two vertices is at distance more than
$\nicefrac{(3d + 2 - d)}{2} = d + 1$ from $S$.
Denote this vertex by $v^\star$.
Recall that $\layer_{\mathcal{L}}(v^*) = \layer_{\mathcal{L}}(B') = \maxlayer_{\mathcal{L}}(S) + 1$.
By \cref{obs:path_to_descendants}, there exists a $v$-$v^\star$ path in the graph $G[\bigcup_{j \geq \layer_{\mathcal{L}}(v^*)} L_j]$, which is a subgraph of $G - S$.
Moreover, \cref{lem:S_close} applied to $v^\star$ and $B'$ yields a $v^\star$-$r$ path in $G - S$.
The concatenation of these two paths gives a $v$-$r$ walk in $G - S$, and therefore also the desired path.
\end{proof}

Now we have all the ingredients to complete the proof of \cref{thm:block-cut-tree}.

\begin{proof}[Proof of \cref{thm:block-cut-tree}]
Letting $d' \coloneqq (3d + 2)$,
we show that the
$d'$-coarse block-cut tree-decomposition $\mathcal{T} \coloneqq (T^{d'}_{\mathcal{L}}, \beta^{d'}_{\mathcal{L}})$ with respect to the $\BFS$-layering of $G$ from an arbitrarily chosen (root) vertex $r \in V(G)$, satisfies the conditions of \cref{thm:block-cut-tree}.
By \cref{obs:valid_decomposition}, it is a valid tree-decomposition of $G$ with adhesion sets of weak diameter at most
$d'$, and by \cref{obs:algo} it can be computed in time $\Oh(n(n+m))$.
So it only remains to prove the second property: for every~$u,v \in V(G)$ in a common bag of $\mathcal{T}$, there is no set $S\subset V(G)$ separating $u$ and $v$ in $G$ with $\diam_G(S)\leq d$ and
$\dist_G(S, \{ u, v\}) > d + 1$.

Let $B_u$ be the block of $\mathcal{L}$ containing $u$, $B_v$ be the block of $\mathcal{L}$ containing $v$, and $\widehat{B}$ be the block of $\mathcal{L}$ that is the least common ancestor of $B_{u}$ and $B_{v}$ in $T_{\mathcal{L}}$.
If $\maxlayer_{\mathcal{L}}(S) < \layer_{\mathcal{L}}(\widehat{B})$, the theorem follows easily from \cref{obs:path_to_descendants}. Indeed, for any vertex $w \in \widehat{B}$, there is a $w$-$v$ path and a $w$-$u$ path in $G[\bigcup_{j\geq \layer_{\mathcal{L}}(\widehat{B})} L_j]$, which is a subgraph of $G - S$.

So assume from now on that $\maxlayer_{\mathcal{L}}(S) \geq \layer_{\mathcal{L}}(\widehat{B})$.
We show that then, there exists a $v$-$r$ path and a $u$-$r$ path in $G - S$, which completes the proof.
By symmetry, it suffices to argue this only for $u$.

If $\maxlayer_{\mathcal{L}}(S)\geq \layer_{\mathcal{L}}(u)-1$, the statement follows from \cref{lem:S_close}.
Otherwise, define $B'$ as in \cref{lem:path_to_root}, i.e., let $B'$ be the block of layer $\maxlayer_{\mathcal{L}}(S)+1$ on the unique $B_{u}$-$\{ r\}$ path of $T_{\mathcal{L}}$.
Observe that, with the current assumptions, we have $\layer_{\mathcal{L}}(\widehat{B}) < \layer_{\mathcal{L}}(B') < \layer_{\mathcal{L}}(B_u)$.
In particular,~$B'$ is an internal vertex on the unique $B_u$-$B_v$ path of $T_{\mathcal{L}}$.
By the definition of $\mathcal{T}$, no block of $\mathcal{L}$ that is an internal vertex on the unique $B_{u}$-$B_{v}$ path of $T_{\mathcal{L}}$ can be $d'$-compact.
Therefore, $B'$ is $d'$-wide.
By \cref{lem:path_to_root}, there exists a $u$-$r$ path in $G - S$.
By applying  symmetric arguments to $v$, we may obtain a $v$-$r$ path in $G - S$.
Concatenating these two paths completes the proof.
%
%
%
%
\end{proof}

\section{A dual formulation}

In this section we derive a dual formulation of \cref{thm:block-cut-tree}, replacing the second condition by an equivalent (up to constant factors) condition expressed in terms of the existence of scattered paths.

Towards this, we require the following result which provides a coarse analogue of Menger's Theorem for the special case where we ask for two scattered paths.

\begin{theorem}[\cite{AlbrechtsenHJKW2024AMenger,GeorgakopoulosP2025Graph}]\label{thm:Menger}
    Let $G$ be a graph and $S,T\subseteq V(G)$ be subsets of vertices of $G$. Suppose that for some $d\in \N$, there do not exist two $S$-$T$ paths $P_1,P_2$ such that $\dist_G(P_1,P_2)\geq d$.
    Then there exists a vertex $w$ of $G$ such that for every $S$-$T$ path $P$, we have $\dist_G(w,P)\leq 129d$.
\end{theorem}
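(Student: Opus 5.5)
The plan is to argue by contraposition. Assume that for every vertex $w$ of $G$ some $S$-$T$ path avoids the ball $N^{129d}[w]$, and build two $S$-$T$ paths at distance at least $d$. We may assume $S\cap T=\emptyset$ and $\dist_G(S,T)$ is large compared with $d$; otherwise a vertex of $S$ closest to $T$ serves as $w$ directly, since every $S$-$T$ path starts in $S$.

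Fix a shortest $S$-$T$ path $P=p_0p_1\cdots p_\ell$ with $p_0\in S$ and $p_\ell\in T$. Every subpath of $P$ is a geodesic in $G$, so the index distance along $P$ equals the graph distance. This is the only metric rigidity we use, and it makes $P$ the natural backbone for the first path $P_1$. For each $i$ we also fix an $S$-$T$ path $Q_i$ that avoids $N^{129d}[p_i]$; this is where the assumption enters.

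The construction is a leapfrog along $P$. We choose checkpoints $p_{i_1},p_{i_2},\dots$ spaced roughly $Cd$ apart, for a suitable constant $C$ well below $129$. Around each checkpoint $Q_{i_k}$ makes a detour: since $Q_{i_k}$ connects $S$ to $T$ while staying far from $p_{i_k}$, it must pass the region of $P$ near $p_{i_k}$ at distance at least $129d$. We cut $Q_{i_k}$ at its last vertex close (within about $2d$) to the initial part of $P$ before $p_{i_k}$ and at its first vertex close to the final part after $p_{i_k}$. Between consecutive checkpoints we then splice together a second path $P_2$ from these detour pieces, and we build $P_1$ from segments of $P$ plus, where needed, pieces of other detours. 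The two paths swap roles at consecutive checkpoints, so that near each checkpoint one path is far from the other, which is ensured by the $129d$ ball. The exact splicing will be done by a greedy induction along $P$, maintaining the invariant that the portions built so far are at distance at least $d$.

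The expected main obstacle is the \emph{global} separation of the spliced paths. Each detour $Q_{i_k}$ is only guaranteed to be far from $p_{i_k}$, but it may wander close to distant parts of $P$ or to earlier detours. To control this, I would choose every cut point of a detour extremally: the last time it comes within about $2d$ of the already-built prefix, and the first time it comes close to $P$ beyond the current checkpoint. Then the geodesic property of $P$ (distance along $P$ equals distance in $G$) combined with the triangle inequality bounds how far back along $P$ a detour can reconnect. The constants are then tuned so that every intersection or near-approach can be rerouted. Each rerouting costs additive losses of order $d$, which accumulate a bounded number of times per checkpoint; this is consistent with the large constant $129$. If the greedy bookkeeping becomes unmanageable, the fallback is a BFS layering from $S$, as in the rest of the paper. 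There one would argue that some layer's block intersecting every $S$-$T$ path has bounded weak diameter, and take $w$ in that block.
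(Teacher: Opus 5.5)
Your proposal describes the shape of a leapfrog argument but does not carry out the step that holds all the difficulty: producing two spliced paths that are \emph{globally} at distance at least $d$. (The paper does not prove this statement; it imports it from the cited works, where this step requires a genuine argument.) The only metric rigidity you use is that $P$ is a geodesic, and that controls distances \emph{to $P$} only. Once the roles swap, both $P_1$ and $P_2$ contain pieces of detours. Maintaining $\dist_G(P_1,P_2)\geq d$ then requires controlling $\dist_G(Q_{i_j},Q_{i_k})$ for distinct checkpoints, and nothing in your setup does. Each $Q_{i_k}$ is an arbitrary path known only to avoid the single ball $N^{129d}[p_{i_k}]$. It can therefore run adjacent to a segment of an earlier detour that was assigned to the other path, far away from $P$. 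Choosing cut points extremally with respect to proximity to $P$ neither detects nor prevents this. ``Rerouting'' such near-approaches changes portions that were already certified, and you give no invariant or termination argument showing the process converges. The claim that losses of order $d$ occur only a bounded number of times per checkpoint is asserted, not proved, and additive losses accumulating over unboundedly many checkpoints would be fatal unless you show they reset. Your cutting rule also presumes that $Q_{i_k}$ comes close to the part of $P$ before $p_{i_k}$. This need not happen, since $Q_{i_k}$ may start at a vertex of $S$ far from $P$.

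The fallback is not just unproved but false as stated. Take the grid with rows $0,\dots,h$ and columns $1,\dots,N$, where $N\gg h$. Let $S$ be row $0$ and $T=\{t\}$ a single vertex of row $h$. The $\BFS$-layers from $S$ are the rows, each row is a single block, and each block has weak diameter $N-1$. Every $S$-$T$ path meets every row, so no block of bounded weak diameter meets every $S$-$T$ path, although $w=t$ trivially satisfies the conclusion. The layering arguments in \cref{lem:S_close} and \cref{lem:path_to_root} only show that a small-diameter set cannot separate a vertex from the root past a wide block. They never produce two far-apart paths, which is the direction you need.

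Finally, your preliminary reduction is correct but misjustified. Since $S$ may have large diameter, ``every $S$-$T$ path starts in $S$'' proves nothing. The right argument uses the hypothesis. A geodesic $P_0$ from $s\in S$ to $t\in T$ of length $\dist_G(S,T)$ is itself an $S$-$T$ path. Hence every $S$-$T$ path $P$ satisfies $\dist_G(P,P_0)<d$, and so $\dist_G(s,P)<\dist_G(S,T)+d\leq 129d$ whenever $\dist_G(S,T)\leq 128d$.
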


The constant $129$ is from the work of Albrechtsen et al.~\cite{AlbrechtsenHJKW2024AMenger}; from now on we denote it by $c\coloneqq 129$.

\begin{definition}
    Let $G$ be a graph, $u,v\in V(G)$ be two vertices of $G$, and $d,\ell\in \N$. We say that
    \begin{itemize}
        \item $u$ and $v$ satisfy the \emph{$(d, \ell)$-cut condition} if there is a vertex subset $F\subseteq V(G)$ such that $\diam_G(F)\leq d$, $\dist_G(F,\{u,v\})>\ell$, and $F$ intersects every $u$-$v$ path; and
        \item $u$ and $v$ satisfy the \emph{$(d, \ell)$-flow condition} if at least one of the following assertions holds:
        \begin{itemize}
        \item $\dist_G(u, v) \leq 2\ell$; or
        \item in $G$ there exist paths $P_1,P_2$ with endpoints $u_1,v_1$ and $u_2,v_2$ respectively, such that we have $\dist_G(P_1,P_2)>d$ and $\dist_G(w_t, w)\leq \ell$ for all $w\in \{u,v\}$ and $t\in \{1,2\}$.
        \end{itemize}
    \end{itemize}
\end{definition}

The following lemma shows that thees two conditions are dual to each other in an approximate sense.

\begin{lemma}\label{cut_flow_duality}
    Let $G$ be a connected graph, $u,v\in V(G)$ be two vertices of $G$, and $d,\ell\in \N$. Then the following implications hold:
    \begin{itemize}
        \item If $u,v$ satisfy the $(d, \ell)$-cut condition, then they do not satisfy the $(d,\ell)$-flow condition.
        \item Assuming $\ell > 2cd$, if $u,v$ do not satisfy the $(d, \ell)$-flow condition, then they do satisfy the $(2cd, \ell - (2cd + 1))$-cut condition.
    \end{itemize}
\end{lemma}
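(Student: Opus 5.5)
For the first implication, I would assume that $F$ witnesses the $(d,\ell)$-cut condition and show that both alternatives of the flow condition fail. The short-distance alternative is easy to exclude. Since $G$ is connected, some $u$-$v$ path exists, and $F$ meets it. A shortest $u$-$v$ path $Q$ also meets $F$ at some vertex $f$. Then
\[
\dist_G(u,v)=\dist_G(u,f)+\dist_G(f,v)>2\ell,
\]
so $\dist_G(u,v)\le 2\ell$ is impossible.

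For the second alternative, suppose $P_1,P_2$ have endpoints $u_t,v_t$ within distance $\ell$ of $u$ and $v$ respectively. I would build a $u$-$v$ walk $W_t$ for each $t$: a shortest $u$-$u_t$ path, then $P_t$, then a shortest $v_t$-$v$ path. The connecting segments consist of vertices within distance $\ell$ of $\{u,v\}$, so they avoid $F$. Since $F$ meets every $u$-$v$ path, and hence every $u$-$v$ walk (a walk contains a path), $F$ must meet both $P_1$ and $P_2$. This gives vertices $f_1\in P_1\cap F$ and $f_2\in P_2\cap F$ with
\[
\dist_G(P_1,P_2)\le \dist_G(f_1,f_2)\le d,
\]
contradicting $\dist_G(P_1,P_2)>d$.

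For the second implication, I would assume the flow condition fails. Then $\dist_G(u,v)>2\ell$, and there are no two paths at distance more than $d$ joining $S\coloneqq N^{\ell}[u]$ to $T\coloneqq N^{\ell}[v]$. Any $S$-$T$ path has endpoints of exactly the type required in the flow condition.

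To match the strict versus non-strict inequality in \cref{thm:Menger}, I would apply it with parameter $d+1$, or directly with $2d$ if that is cleaner. No two $S$-$T$ paths satisfy $\dist_G\ge d+1$, so \cref{thm:Menger} gives a vertex $w$ with $\dist_G(w,P)\le c(d+1)\le 2cd$ for every $S$-$T$ path $P$. The last inequality uses $d\ge 1$; if $d=0$ is allowed, I would handle it separately or absorb it.

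I would then set $F\coloneqq N^{cd'}[w]$, where $cd'\le cd+c$. More simply, I would take $F$ to be the ball of radius $cd$ or so around $w$, arranged so that $\diam_G(F)\le 2cd$. I need to check two things about $F$.

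\textbf{$F$ meets every $u$-$v$ path.} Every $u$-$v$ path $P$ contains an $S$-$T$ subpath $P'$. Then $\dist_G(w,P')\le$ the radius of $F$, so $F$ meets $P'$ and hence $P$.

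\textbf{$F$ is far from $u$ and $v$.} I would use a path through the vicinity of $u$ and the far vertex $v$. Take a shortest $u$-$v$ path and let $P'$ be its $S$-$T$ subpath that starts at distance exactly $\ell$ from $u$ and ends at distance $\ell$ from $v$. The intended consequence is that $w$ lies within $\approx cd$ of a vertex whose distance from $u$ lies in $[\ell,\dist_G(u,v)-\ell]$. This alone does not bound $\dist_G(w,u)$ from below.

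Instead, I would argue by contradiction. Suppose $\dist_G(w,u)\le \ell-cd-1$, roughly. Consider the trivial $S$-$T$ paths consisting of the shortest path from a vertex $s\in S$ to $T$. These do not obviously avoid $w$'s vicinity, so I would choose paths more carefully. Take the single vertex set: a path $P$ starting at a point $s\in S$ chosen on the opposite side. This is the main obstacle.

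The cleaner route is to shrink $S$ and $T$. I would apply \cref{thm:Menger} to $S'\coloneqq N^{\ell}[u]$ but measure $F$'s distance from $u$ via the lost radius. Any path witnessing a flow for the pair $(S,T)$ also witnesses one for the original condition, so I may replace $\ell$ by any $\ell'\le\ell$. Take $S=\{u\}$ and $T=\{v\}$ directly. No two $u$-$v$ paths are more than $d$ apart, since such paths would satisfy the flow condition. Hence $w$ is within $2cd$ of every $u$-$v$ path.

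Now suppose $\dist_G(w,u)\le \ell-(2cd+1)$, and write $r\coloneqq 2cd$. Consider paths from $u$ to $v$. I expect to construct two $S$-$T$ paths far from each other unless $w$ is mid-way, but the exact construction is where the constants $2cd$ and $\ell-(2cd+1)$ must come from.

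My concrete plan for this step is as follows. Apply \cref{thm:Menger} to $S=\{u\}$ and $T=\{v\}$ to get $w$, and put $F=N^{cd}[w]\cap$(every path is hit). Then, if $\dist_G(F,u)\le \ell-(2cd+1)$, replace $F$ by $F\setminus N^{\ell-2cd-1}[u]$ and show this truncated set still meets every $u$-$v$ path. I would prove this using the endpoints-within-$\ell$ flexibility: a $u$-$v$ path avoiding the truncated set could be rerouted near $u$ into two far paths, contradicting the failure of the flow condition. I flag this rerouting as the hardest step.
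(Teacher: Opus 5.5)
Your first implication is correct and follows the paper's argument. The second implication, however, is left unproved: you never establish $\dist_G(F,\{u,v\})>\ell-(2cd+1)$.

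Your setup matches the paper: $S=N^\ell[u]$, $T=N^\ell[v]$, \cref{thm:Menger}, $F$ a ball around $w$, and the observation that every $u$-$v$ path contains an $S$-$T$ subpath. The step you discard is exactly the one that finishes the proof. Let $q_0q_1\cdots q_D$ be a shortest $u$-$v$ path, where $D=\dist_G(u,v)>2\ell$. Its segment $P'=q_\ell\cdots q_{D-\ell}$ is an $S$-$T$ path all of whose vertices are at distance at least $\ell$ from $u$ and from $v$. (The paper uses a shortest $S$-$T$ path, which has the same property.) Since $F$ meets every $S$-$T$ path, there is some $z\in F\cap P'$. Then for every $f\in F$ the triangle inequality gives $\dist_G(u,f)\ge \dist_G(u,z)-\dist_G(z,f)\ge \ell-\diam_G(F)\ge \ell-2cd>\ell-(2cd+1)$, and symmetrically for $v$. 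No separate lower bound on $\dist_G(w,u)$ is needed. Contrary to your remark, one does follow anyway: if $F=N^{cd}[w]$, then $\dist_G(u,w)\ge \ell-cd$.

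The route you switch to does not work. With $S=\{u\}$ and $T=\{v\}$, any two $u$-$v$ paths share $u$. So the hypothesis of \cref{thm:Menger} holds vacuously, $w=u$ is a legitimate output, and the theorem tells you nothing. Your $F$ is then a ball around $u$, and $F\setminus N^{\ell-2cd-1}[u]$ is empty whenever $\ell$ is large compared to $cd$. The unspecified ``rerouting'' would have to carry the whole argument, and you give no construction for it.

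Separately, the strict versus non-strict mismatch you noticed is genuine; the paper glosses over it by applying \cref{thm:Menger} with parameter $d$. Your fix does not recover $\diam_G(F)\le 2cd$, though. Applying the theorem with $d+1$ gives a ball of radius $c(d+1)$, hence weak diameter up to $2c(d+1)$. The inequality $c(d+1)\le 2cd$ bounds the radius, not the diameter. This only affects constants, not the structure of the argument.
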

\begin{proof}
    First we show that if $u, v$ satisfy the $(d, \ell)$-cut condition, then they do not satisfy the $(d, \ell)$-flow condition.
    By definition of the cut condition, there exists a set $F$ such that we have $\diam_G(F)\leq d$, $\dist_G(F,\{u,v\})>\ell$, and $F$ intersects every $u$-$v$ path.
    Note that since the distance from $F$ to both $u$ and $v$ is larger than $\ell$, it must be that $\dist_G(u, v) > 2\ell.$
    Now, suppose towards a contradiction that $u, v$ satisfy the $(d, \ell)$-flow condition.
    Then, there exist two paths $P_1$ and $P_2$ with endpoints $u_1,v_1$ and $u_2,v_2$ respectively, witnessing that the $(d, \ell)$-flow condition is satisfied.
    Let us consider a shortest $u$-$u_{1}$ path $Q_1$ and a shortest $v$-$v_1$ path $R_1$ respectively.
    Then $P'_1 \coloneqq Q_1\cup P_1\cup R_1$ is a $u$-$v$ walk.
    Analogously, we define $Q_2$, $R_2$, and~$P_2'$.
    By assumption, $F$ intersects $P_1'$ and $P_2'.$
    As the paths $Q_1,Q_2$ and $R_1,R_2$ are contained in $N^\ell[u]$ and $N^\ell[v]$ respectively, $F$ intersects none of these paths.
    Therefore $F$ must intersect both $P_1$ and~$P_2$.
    However, we have $\dist_G(P_1, P_2)>d$, which contradicts the assumption that $\diam_G(F) \leq d$.

    Now, we prove that if $u, v$ do not satisfy the $(d, \ell)$-flow condition  with $\ell > 2cd$, then they satisfy the $(2cd, \ell - (2cd + 1))$-cut condition.
    Define $S \coloneqq N^\ell[u]$ and $T \coloneqq N^\ell[v]$ respectively.
    By definition of the flow condition we have that $\dist_G(u, v) > 2\ell$ and moreover that there are no two $S$-$T$ paths which are at distance more than $d$ from each other.
    Hence, by \cref{thm:Menger}, there exists a vertex $w$ such that $F \coloneqq N^{cd}[w]$ intersects every $S$-$T$ path.
    Let $P$ be the shortest $N^\ell[u]$-$N^\ell[v]$ path in $G.$
    Since $\dist_G(u, v) > 2\ell$, it must be that $N^\ell[u]$ and $N^\ell[v]$ are disjoint, and in particular, it must be that every vertex of $P$ is at distance at least $\ell$ from both $u$ and $v.$
    Let $z$ be a vertex in $P\cap F$.
    As $\diam_G(F) \leq 2cd$, $\dist_G(u,z)\geq \ell$, and $\dist_G(v,z)\geq \ell$, no vertex in $N^{\ell-(2cd+1)}[u]$ and $N^{\ell-(2cd+1)}[v]$ belongs to $F$.
    Therefore, $\dist_G(F, \{ u, v\}) > \ell-(2cd+1).$
    Finally, observe that every $u$-$v$ path in $G$ contains an $S$-$T$ subpath.
    Therefore $F$ intersects every $u$-$v$ path in $G$ and, as a result $u$ and $v$, satisfy the $(2cd, \ell-(2cd+1))$-cut condition.
\end{proof}

Finally, using \cref{cut_flow_duality} we reformulate \cref{thm:block-cut-tree} by dualizing the coarse $2$-connectivity condition.

\begin{corollary}\label{cor:block-cut-tree-dual}
For every positive integer $d$ and every connected graph $G$, there exists a tree decomposition $(T, \beta)$ of $G$ such that
\begin{itemize}
\item each adhesion set of $(T, \beta)$ has weak diameter at most $3d + 2$; and
\item for any two vertices $u,v \in V(G)$ in a common bag of $(T, \beta)$,  one of the following~assertions~holds:
        \begin{itemize}
            \item $\dist_G(u, v) \leq 4(d + 1)$; or
            \item in $G$ there exist paths $P_1,P_2$, with endpoints $u_1,v_1$ and $u_2,v_2$ respectively, such that we have $\dist_G(P_1,P_2)> \lfloor \nicefrac{d}{258} \rfloor$ and $\dist_G(w_t, w)\leq 2(d + 1)$ for all $w\in \{u,v\}$ and $t\in \{1,2\}$.
        \end{itemize}
\end{itemize}
\end{corollary}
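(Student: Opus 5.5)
We take the tree-decomposition $(T,\beta)$ given by \cref{thm:block-cut-tree} for the same $d$. Its first property already gives adhesion sets of weak diameter at most $3d+2$, so only the second property needs work. Fix $u,v$ in a common bag and set $\ell \coloneqq 2(d+1)$ and $d_0 \coloneqq \lfloor d/258\rfloor = \lfloor d/(2c)\rfloor$. The goal is to show that $u,v$ satisfy the $(d_0,\ell)$-flow condition, since this is exactly the disjunction in the corollary: the first alternative $\dist_G(u,v)\le 2\ell = 4(d+1)$ matches, and the second matches with separation $>d_0$ and endpoint radius $\ell=2(d+1)$.

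We argue by contradiction and suppose the $(d_0,\ell)$-flow condition fails. To apply the second item of \cref{cut_flow_duality} with parameter $d_0$ we need $\ell > 2cd_0$. This holds because $2c d_0 \le d < 2(d+1)$. The lemma then yields the $(2cd_0,\ \ell-(2cd_0+1))$-cut condition: there is a set $F$ intersecting every $u$-$v$ path with $\diam_G(F)\le 2cd_0\le d$ and
\[
\dist_G(F,\{u,v\}) > 2(d+1)-(2cd_0+1) \ge 2d+2-d-1 = d+1 .
\]
We also need $u,v\notin F$, so that $F$ genuinely separates $u$ and $v$ in the sense of the preliminaries. This follows from the distance bound just obtained. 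Hence $S\coloneqq F$ is a set with $\diam_G(S)\le d$ and $\dist_G(S,\{u,v\})>d+1$ that separates $u$ and $v$, which contradicts the second property of \cref{thm:block-cut-tree}.

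The only delicate part is the arithmetic when choosing $\ell$ and $d_0$. Two inequalities must hold simultaneously: $2cd_0\le d$, to control the diameter of $F$, and $\ell-(2cd_0+1)\ge d+1$, to get the distance bound. The choices $\ell=2(d+1)$ and $d_0=\lfloor d/(2c)\rfloor$ satisfy both. The degenerate case $d_0=0$, when $d<258$, needs no separate treatment: then $F$ is within a single vertex's $0$-ball, and the lemma still applies because $\ell>0$.
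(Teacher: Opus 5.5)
Your proof is correct and follows the paper's route. The paper likewise takes the decomposition from \cref{thm:block-cut-tree}, reads its second property as saying that $u,v$ fail the $(d,d+1)$-cut condition, and applies the second item of \cref{cut_flow_duality} with parameters $\lfloor d/2c\rfloor$ and $2(d+1)$. You additionally spell out the arithmetic the paper leaves implicit: that $\ell>2cd_0$, and that the resulting $(2cd_0,\ell-(2cd_0+1))$-cut condition implies the $(d,d+1)$-cut condition.
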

\begin{proof} The second condition featured in \cref{thm:block-cut-tree} is equivalent to the following statement: For any two vertices $u,v \in V(G)$ in a common bag of $(T, \beta)$, $u, v$ do not satisfy the $(d, d+1)$-cut condition.
Then, by \cref{cut_flow_duality} we conclude that any two vertices in a common bag of $(T, \beta)$ satisfy the $(\lfloor \nicefrac{d}{2c} \rfloor, 2(d + 1))$-flow condition.
\end{proof}


\bibliographystyle{abbrv}
\bibliography{ref}

\end{document}